\begin{document}

\frontmatter          
\addtocmark{Weak abelian periodicity of infinite words} 

\title{Weak abelian periodicity of infinite words}
\titlerunning{Weak abelian periodicity of infinite words}  
%
%
\author{Sergey Avgustinovich\inst{1} \and Svetlana Puzynina\inst{1,2}\thanks{Supported in part by the Academy of Finland under grant 251371,
by Russian Foundation of Basic Research (grant 12-01-00448), and
by RF President grant MK-4075.2012.1.} }
\authorrunning{S. Avgustinovich, S. Puzynina} 
%
\tocauthor{Sergey Avgustinovich, Svetlana Puzynina}
\institute{Sobolev Institute of Mathematics, Russia,
\email{avgust@math.nsc.ru} \and University of Turku, Finland,
\email{svepuz@utu.fi}}

\maketitle              

\begin{abstract}
We say that an infinite word $w$ is \emph{weak abelian periodic}
if it can be factorized into finite words with the same
frequencies of letters. In the paper we study properties of weak
abelian periodicity, its relations with balance and frequency. We
establish necessary and sufficient conditions for weak abelian
periodicity of fixed points of uniform binary morphisms. Also, we
discuss weak abelian periodicity in minimal subshifts.
\end{abstract}
The study of abelian properties of words dates back to Erd\"{o}s's
question whether there is an infinite word avoiding abelian
squares \cite{e}. Abelian powers and their avoidability in
infinite words is a natural generalization of analogous questions
for ordinary powers. The answer to Erd\"{o}s's question has been
given by Ker\"{a}nen, who provided a construction of an abelian
square-free word \cite{keranen}. From that time till nowadays,
many problems concerning different abelian properties of words
have been studied, including abelian periods, abelian powers,
avoidability, complexity (see, e. g., \cite {CI}, \cite {crsz},
\cite {cft}, \cite {pz}).

Two words are said to be abelian equivalent, if they are
permutations of each other. Simirarly to usual powers, an abelian
$k$-power is a concatenation of $k$ abelian equivalent words. We
define a \emph{weak abelian power} as a concatenation of words
with the same frequencies of letters. So, in a weak abelian power
we admit words with different lengths; if all words are of the
same length, then we have an abelian power. Earlier some questions
about avoidability of weak abelian powers have been considered. In
\cite{K} for given integer $k$ the author finds an upper bound for
length of binary word which does not contain weak abelian
$k$-powers. In \cite{gr} the authors build an infinite ternary
word having no weak abelian $(5^{11}+1)$-powers.

The notion of abelian period is a generalization of the notion of
normal period, and it is closely related with abelian powers. A
periodic infinite word can be defined as an infinite power.
Similarly, we say that a word is (weak) abelian periodic, if it is
a (weak) abelian $\infty$-power. In the paper we study the
property of weak abelian periodicity for infinite words, in
particular, its connections with related notions of balance and
frequency. We establish necessary and sufficient conditions for
weak abelian periodicity of fixed points of uniform binary
morphisms. Also, we discuss weak abelian periodicity in minimal
subshifts.

The paper is organized as follows. In Section 2 we fix our
terminology, in Section 3 we discuss some general properties of
weak abelian periodicity and its connections with other notions,
such as balance and frequencies of letters. In Section 4 we give a
criteria for weak abelian periodicity of fixed points of primitive
binary uniform morphisms. In Section 5 we study weak abelian
periodicity of points in shift orbit closure of uniform recurrent
words. 

\section{Preliminaries}
In this section we give some basics on words following terminology
from \cite {Lo} and introduce our notions.

Given a finite non-empty set $\Sigma$ (called the alphabet), we
denote by $\Sigma^*$ and $\Sigma^{\omega}$, respectively, the set
of finite words and the set of (right) infinite words over the
alphabet $\Sigma$. Given a finite word $u = u_1 u_2 \dots u_n$
with $n \geq 1$ and $u_i \in \Sigma$, we denote the length $n$ of
$u$ by $|u|$. The empty word will be denoted by $\varepsilon$ and
we set $|\varepsilon| = 0$.

Given the words $w$, $x$, $y$, $z$ such that $w = xyz$, $x$ is
called a \emph{prefix}, $y$ is a \emph{factor} and $z$ a
\emph{suffix} of $w$. The factor of $w$ starting at position $i$
and ending at position $j$ will be denoted by $w[i, j] =
w_iw_{i+1} \dots w_j$. The prefix (resp., suffix) of length $n$ of
$w$ is denoted
${\rm pref}_n(w)$ (resp., ${\rm suff}_n(w)$). 
 The set of all factors of $w$ is denoted by
$F(w)$, the set of all factors of length $n$ of $w$ is denoted by
$F_n(w)$.

An infinite word $w$ is \emph{ultimately periodic}, if for some
finite words $u$ and $v$ it holds $w=uv^{\omega}$; $w$ is
\emph{purely periodic} (or briefly \emph{periodic}) if
$u=\varepsilon$. An infinite word is \emph{aperiodic} if it is not
ultimately periodic.

An infinite word $w=w_1w_2\dots$ is \emph{recurrent} if any of its
factors occurs infinitely many times in it. The word $w$ is
\emph{uniformly recurrent} if any its factor $u$ there exists $C$
such that whenever $w[i,j]=u$, there exists $0<k\leq C$ such that
$w[i,j]=w[i+C,j+C]=u$. In other words, factors occur in $w$ in a
bounded gap.

Given a finite word $u = u_1 u_2 \dots u_n$ with $n \geq 1$ and
$u_i \in \Sigma$, for each $a \in \Sigma$, we let $|u|_a$ denote
the number of occurrences of the letter $a$ in $u$. Two words $u$
and $v$ in $\Sigma^*$ are \emph{abelian equivalent}, denoted $u
\sim_{ab} v$, if and only if $|u|_a = |v|_a$ for all $a
\in \Sigma$. It is easy to see that abelian equivalence is indeed 
an equivalence relation on $\Sigma^*$.

An infinite word $w$ is called \emph{abelian (ultimately)
periodic}, if $w = v_0 v_1 \dots$, where $v_k\in\Sigma^*$ for
$k\geq0$, and $v_i \sim_{ab} v_j$ for all integers $i \geq 1$, $j
\geq 1$.

For a finite word $w\in\Sigma^*$, we define \emph{frequency}
$\rho_a(w)$ of a letter $a\in\Sigma$ in $w$ as
$\rho_a(w)=\frac{|w|_a}{|w|}$.

\begin{definition} An infinite word $w$ is called \emph{weak abelian
(ultimately) periodic}, if $w = v_0 v_1 \dots$, where $v_i\in
\Sigma^*$, $\rho_a(v_i) = \rho_a(v_j)$ for all $a\in\Sigma$ and
all integers $i, j\geq 1$.
\end{definition}

In other words, a word is weak abelian periodic if it can be
factorized into words of different lengths with the same
frequencies of letters. In the further text we usually omit the
word ``ultimately'', meaning that there can be a prefix with
different frequencies. Also, we often write WAP instead of weak
abelian periodic for brevity.

\begin{definition} An infinite word $w$ is called \emph{bounded weak abelian
periodic}, if it is weak abelian periodic with bounded lengths of
blocks, i. e., there exists $C$ such that for every $i$ we have
$|v_i|\leq C$. \end{definition}

We mainly focus on binary words, but we also make some
observations in the case of general alphabet. One can consider the
following geometric interpretation of weak abelian periodicity.
Let $w=w_1 w_2 \dots$ be an infinite word over a finite alphabet
${\rm \Sigma}$. We translate $w$ to a graphic visiting points of
the infinite rectangular grid by interpreting letters of $w$ by
drawing instructions. In the binary case, we assign $0$ with a
move by vector $\textbf{v}_0=(1,-1)$, and $1$ with a move $\textbf{v}_1=(1,1)$. We
start at the origin $(x_0,y_0)=(0,0)$. At step $n$, we are at a
point $(x_{n-1}, y_{n-1})$ and we move by a vector corresponding
to the letter $w_{n}$, so that we come to a point $(x_{n},
y_{n})=(x_{n-1}, y_{n-1})+v_{w_n}$, and the two points $(x_{n-1},
y_{n-1})$ and $(x_{n}, y_{n})$ are connected with a line segment.
So, we translate the word $w$ to a path in $\mathbb{Z}^2$. We
denote corresponding graphic by $g_w$. So, for any word $w$, its
graphic is a piece-wise linear function with linear segments
connecting integer points. It is easy to see that weak abelian
periodic word $w$ has graphic with infinitely many integer points
on a line with rational slope (we will sometimes write that $w$ is
WAP along this line). A bounded weak abelian periodic word has a
graphic with bounded differences between letters. Note also that
instead of vectors $(1,-1)$ and $(1,1)$ one can use any other pair
of noncollinear vectors $\textbf{v}_0$ and $\textbf{v}_1$, and sometimes it will be
convenient for us to do it. For a $k$-letter alphabet one can
consider a similar graphic in $\mathbb{Z}^k$. Note that the
graphic can also be defined for finite words in a similar way, and
we will sometimes use it.

\begin{definition} We say that a word $w$ is of \emph{bounded
width}, if there exist two lines with the same rational slope, so
that the path corresponding to $w$ lies between these two lines.
Formally, there exist rational numbers $a, b_1, b_2$, so that
$ax+b_1 \leq g_w(x)\leq ax+b_2$. \end{definition}

Note that we focus on rational $a$, because words of bounded
irrational width cannot be weak abelian periodic. Equivalently,
bounded width means that graphic of the word lies on finitely many
lines with rational coefficients.

We will also need notions of frequency and balance, which are
closely connected with abelian periodicity. Relations between
these notions are discussed in the next section. A word $w$ is
called \emph{$C$-balanced} if for any its factors $u$ and $v$ of
equal length $| |u|_a - |v|_a |\leq C$ for any $a\in\Sigma$.
Actually, the notion of bounded width is equiivalent to the notion
of balance (see, e.g., \cite {a}). We say that a letter
$a\in\Sigma$ has \emph{frequency} $\rho_a(w)$ in $w$ if
$\rho_a(w)=\lim_{n\to \infty}\rho_a({\rm pref}_n(w))$. Note that
for some words the limit does not exist, and we say that such
words do not have letter frequencies. Note also that we define
here a prefix frequency, though sometimes another version of
frequency of letters in words is studied (see Section 5 for
definitions). Remark that if a WAP word has a frequency of a
letter, then this frequency coincides with frequency of this
letter in factors of corresponding factorization.

A \emph{morphism} is a function $\varphi : \Sigma^*\to \Delta^*$
such that $\varphi(\varepsilon) = \varepsilon$ and $\varphi(uv) =
\varphi(u)\varphi(v)$, for all $u, v \in \Sigma^*$. Clearly, a
morphism is completely defined by the images of the letters in the
domain. For most of morphisms we consider, $\Sigma = \Delta$. A
morphism is \emph{primitive}, if there exists $k$ such that for
every $a\in\Sigma$ the image $\varphi^k(a)$ contains all letters
from $\Delta$. A morphism is \emph{uniform}, if
$|\varphi(a)|=|\varphi(b)|$ for all $a,b\in\Sigma$, and
prolongeable on $a \in \Sigma$, if $a={\rm pref}_1(\varphi (a))$.
If $\varphi $ is prolongeable on $a$, then $\varphi^n(a)$ is a
proper prefix of $\varphi^{n+1}(a)$, for all $n \in \omega$.
Therefore, the sequence $(\varphi^n(a))_{n\geq 0}$ of words
defines an infinite word $w$ that is a fixed point of $\varphi$.

\medskip

Remind the definition of Toeplitz words. Let ? be a letter not in
$\Sigma$ . For a word $w \in \Sigma ( \Sigma \cup ? )^*$, let
$$T_0 ( w ) = ? ^{\omega}  , T_{i+1}(w)=F_w (T_i(w)), $$
where $F_w ( u )$, defined for any $u \in ( \Sigma  \cup  ?
)^{\omega}$, is the word obtained from $w^{\omega}$ by replacing
the sequence of all occurrences of ? by $u$; in particular, $F_w (
u ) = w^{\omega}$ if $w$ contains no ?.

Clearly,
$$T ( w ) = \lim_{i\to\infty} T_i ( w ) \in \Sigma^{\omega} $$
is well-defined , and it is referred to as the \emph{Toeplitz word}
determined by the pattern $w$. Let $p = | w |$ and $q = | w |_?$
be the length of $w$ and the number of ?'s in $w$, respectively.
Then  $T ( w )$ is called a $( p , q )$-Toeplitz word.

\medskip

\noindent \textbf{Example 1}. Paperfolding word: $$00100110001101100010011100110110\dots$$
This word can be defined, e.g., as a Toeplitz word with pattern $w =
0?1?$. The graphic corresponding to the paperfolding
word with $\textbf{v}_0=(1,-1)$, $\textbf{v}_1=(1,1)$ is in Fig. 1. The paperfolding
word is not balanced and is WAP along the line $y=-1$ (and actully
along any line $y=C$, $C=-1, -2, \dots$). See Proposition
\ref{boundedwidth} (2) for details.

\medskip

\begin{picture}(330,80)
\multiput(10,0)(10,0){32}{\line(0,1){80}}
\multiput(0,10)(0,10){7}{\line(1,0){330}}

\thicklines \put(0,70){\vector(1,0){330}}
\put(10,0){\vector(0,1){80}} \put(10,70){\circle*{5}}

\put(10,70){\line(1,-1){20}} \put(30,50){\line(1,1){10}}
\put(40,60){\line(1,-1){20}} \put(60,40){\line(1,1){20}}
\put(80,60){\line(1,-1){30}} \put(110,30){\line(1,1){20}}
\put(130,50){\line(1,-1){10}} \put(140,40){\line(1,1){20}}
\put(160,60){\line(1,-1){30}} \put(190,30){\line(1,1){10}}
\put(200,40){\line(1,-1){20}} \put(220,20){\line(1,1){30}}
\put(250,50){\line(1,-1){20}} \put(270,30){\line(1,1){20}}
\put(290,50){\line(1,-1){10}} \put(300,40){\line(1,1){20}}
\put(320,60){\line(1,-1){10}}

\end{picture}

\vskip18pt \small \noindent\parbox[t]{162mm}{{\bf Fig. 1.} The
graphic of the paperfolding word with $\textbf{v}_0=(1,-1)$,
$\textbf{v}_1=(1,1)$.} \normalsize

\bigskip

\noindent \textbf{Example 2}. A word obtained as an image of the morphism $0 \to 01$, $1\to 0011$ of any nonperiodic binary word is bounded WAP.

\section{General properties of weak abelian periodicity}

In this section we discuss relations between notions defined in
the previous section and observe some simple properties of weak
abelian periodicity. We start with the property of bounded width
and its connections to weak abelian periodicity.

\begin{proposition}

\noindent 1. If an infinite word $w$ is of bounded width, then $w$
is WAP.

\noindent 2. There exists an infinite word $w$ of bounded width
which is not bounded WAP.

\noindent 3. If an infinite word $w$ is bounded WAP, then $w$ is
of bounded width.

\end{proposition}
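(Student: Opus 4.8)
The plan is to translate everything into the geometric language fixed just before the statement. A binary word $w$ is WAP exactly when its graphic $g_w$ has infinitely many integer points on a single line of rational slope: the break-points of the factorization can be taken to be those collinear points, and since a block $v_i$ with fixed frequencies traces a chord of slope $s=2\rho_1(v_i)-1$, all chords having the same slope forces equal letter frequencies. The bounded version refines this: $w$ is bounded WAP iff there is a rational-slope line on which the integer points of $g_w$ occur with bounded gaps (a syndetic set), since bounded blocks mean bounded differences of consecutive break-points. I will prove (1) and (3) directly from this dictionary and construct an explicit word for (2).

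For part 1, write $a=p/q$ in lowest terms and assume $ax+b_1\le g_w(x)\le ax+b_2$. Then for every integer $n$ the number $q\,g_w(n)-p\,n=q\big(g_w(n)-an\big)$ is an integer lying in the fixed bounded interval $[qb_1,qb_2]$, so by pigeonhole some value $c_0$ is attained for infinitely many $n$; for all these $n$ the points $(n,g_w(n))$ lie on the rational-slope line $y=ax+c_0/q$, and using them as break-points yields a WAP factorization with block frequency $\rho_1=(1+a)/2$. For part 3, let $w=v_0v_1\cdots$ be bounded WAP with $|v_i|\le C$ and frequency slope $s$, and set $n_i=|v_0\cdots v_i|$. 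The relation $g_w(n_i)-g_w(n_{i-1})=s(n_i-n_{i-1})$ for $i\ge1$ gives $g_w(n_i)=sn_i+b$ for a fixed $b$ and all $i\ge0$. For any $m\ge n_0$ pick $i$ with $n_{i-1}\le m\le n_i$; since $g_w$ moves by $\pm1$ per step, $|g_w(m)-g_w(n_{i-1})|\le m-n_{i-1}\le C$ while $|(sm+b)-g_w(n_{i-1})|=|s|(m-n_{i-1})\le |s|C$, hence $|g_w(m)-(sm+b)|\le(1+|s|)C$ for all $m$ (the finitely many positions inside $v_0$ only enlarge the constant). This is bounded width with slope $s$.

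For part 2, I will build a balanced word whose graphic stays in a strip $-M\le g_w\le M$ with $M\ge2$ (so it has bounded width with slope $0$, hence is WAP by part 1) but returns to every level only with unbounded gaps. Let the path run through successive cycles indexed by $k$: in cycle $k$ it climbs monotonically from level $-M$ to level $M$, then oscillates $T_k$ times between $M-1$ and $M$, then descends monotonically back to $-M$, then oscillates $B_k$ times between $-M+1$ and $-M$, with $T_k,B_k\to\infty$. Each extreme level is visited only inside its own oscillation phase, and each intermediate level only during the two monotone transitions, so consecutive visits to any fixed level $b$ are separated by a full oscillation phase, whose length tends to infinity; thus $\{n:g_w(n)=b\}$ has unbounded gaps for every $b$. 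Any line of slope $s\ne0$ leaves the bounded strip and so meets $g_w$ in only finitely many integer points, so the only candidate lines for a bounded WAP factorization are horizontal, and on each of these the integer points are non-syndetic. A non-syndetic set contains no infinite syndetic subset (inside a large gap of the superset any subset inherits a gap at least as large), so no choice of break-points produces bounded blocks; hence $w$ is not bounded WAP.

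Parts 1 and 3 are routine pigeonhole and bounded-deviation estimates. The substance is in part 2: the construction is easy, but the two supporting observations — that a bounded-width word can be WAP only along lines of its own slope, and that unbounded gaps cannot be repaired by thinning the set of break-points — are the points I expect to need the most care.
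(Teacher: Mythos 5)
Your proposal is correct and follows essentially the same route as the paper: part 1 is the same pigeonhole argument (finitely many rational lines in the strip, one of which is hit infinitely often), part 3 is the same observation that break-points on one line plus blocks of length at most $C$ confine the graphic to a strip of width $O(C)$, and your part 2 construction (a path confined to a strip whose levels are revisited only after oscillation phases of growing length) is structurally the same as the paper's example $(01)^1\,1\,(10)^2\,0\,(01)^3\,1\,(10)^4\cdots$. Your write-up of part 2 is in fact more complete than the paper's, since you make explicit the two facts the paper leaves implicit: that a bounded-width word can only be bounded WAP along lines of its own slope, and that a subset of a non-syndetic set cannot be syndetic.
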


\begin{proof}

1. Since $w$ is of bounded width, its graphic lies on a finite
number of lines with rational coefficients. By the pigeonhole
principle it has infinitely many points on one of these lines and
hence is WAP.

\medskip

2. Consider $$w=01 1 1010 0 010101 1 10101010 \dots = (01)^1 1
(10)^2 0 (01)^3 1 (10)^4 \dots (01)^{2i-1} 1 (10)^{2i} 0 \dots$$
Taking its graphic with $\textbf{v}_0=(-1,1)$ and $\textbf{v}_1=(1,1)$ we see that
it lies on the lines $y=0, -1, 1, 2$ and hence $w$ is of bounded
width. The graphic intersects each of these lines infinitely many
times, but each of them with growing gaps.

\medskip

3. Again, take graphic of $w$ with $\textbf{v}_0=(-1,1)$ and $\textbf{v}_1=(1,1)$.
 Bounded WAP means that it intersects
some line $y=ax+b$ with $a$, $b$ rational and gap at most $C$ for
some integer $C$, i. e., the difference between two consecutive
points $x_i$ and $x_{i+1}$ is at most $C$. So, the graphic lies
between lines $y=ax+b-C/2$ and $y=ax+b +C/2$, and hence $w$ is of
bounded width.

\end{proof}

In the following proposition we discuss the connections between uniform
recurrence and WAP.

\begin{proposition}\label{boundedwidth}

\noindent 1. If $w$ is uniformly recurrent and of bounded width,
then $w$ is bounded WAP.

\noindent 2. There exists a uniformly recurrent WAP word $w$ which
is not of bounded width.

\end{proposition}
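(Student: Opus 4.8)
The two statements are essentially independent, and I would prove them separately; the first carries the real content.

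For part 1, set $h_n=|\mathrm{pref}_n(w)|_1-|\mathrm{pref}_n(w)|_0$. Bounded width means that the rational slope $a=r/s$ exists with $h_n-a n$ bounded, so the letter $1$ has the rational frequency $\alpha=(1+a)/2=p/q$, and the integer-valued potential $D(n)=s\,h_n-r\,n$ takes only finitely many values. The arithmetic identity $D(n)-D(m)=s(h_n-h_m)-r(n-m)$ shows that, for $m<n$, one has $D(n)=D(m)$ if and only if the block $w[m+1,n]$ has frequency $\alpha$. Hence an infinite set of cut positions on which $D$ is constant factorizes $w$ into blocks of equal frequency, and if those positions have bounded gaps the factorization witnesses bounded WAP. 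So the whole statement reduces to: \emph{some value of $D$ is attained with bounded gaps}.

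Let $d_-\le d_+$ be the smallest and largest values of $D$ attained infinitely often, put $M=d_+-d_-$, and discard a prefix so that $D(n)\in[d_-,d_+]$ for all remaining $n$ (only finitely many $n$ are excluded). For a factor $u$ occurring as $w[m+1,n]$, its relative profile $\Delta_u(j)=D(m+j)-D(m)$ depends only on $u$; set $\mathrm{rng}(u)=\max_j\Delta_u(j)-\min_j\Delta_u(j)$. The key observation --- and the only non-routine step --- is that a factor $u$ with $\mathrm{rng}(u)=M$ \emph{pins the absolute level}: since every value of $D$ lies in the strip $[d_-,d_+]$ of width exactly $M$, at any occurrence of $u$ the relative minimum of $\Delta_u$ is forced to sit at absolute level $d_-$, so $D$ equals $d_-$ at a fixed position inside \emph{every} occurrence of $u$. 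A factor of width $M$ exists (take any factor spanning an occurrence of the value $d_-$ and an occurrence of $d_+$). Now I invoke uniform recurrence: fixing one width-$M$ factor $u$, there is a constant $R$ such that $u$ occurs in every window of length $R$, and by the pinning property each occurrence contains a position at level $d_-$. Therefore the positions with $D=d_-$ have gaps at most $R+|u|$, and cutting $w$ there exhibits it as bounded WAP. The main obstacle is exactly the pinning step: in general the absolute level of a factor is not determined by the factor, and it is the confinement to a width-$M$ strip that upgrades "a long enough recurring factor'' to "a recurring absolute level.''

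For part 2 I would take $w$ to be the paperfolding word of Example 1, the Toeplitz word of pattern $0?1?$, and check three things. First, $w$ is uniformly recurrent, being a Toeplitz word. Second, $w$ is WAP: from the relations $w_{2n}=w_n$, $w_{4n+1}=0$, $w_{4n+3}=1$ one computes $h_{2^k-1}=-1$ for every $k$, so cutting at the positions $2^k-1$ gives blocks $w[2^k,2^{k+1}-1]$ of length $2^k$, each with equally many $0$'s and $1$'s, i.e.\ frequency $1/2$. Third, $w$ is not of bounded width: writing $S(n)=-h_n=|\mathrm{pref}_n(w)|_0-|\mathrm{pref}_n(w)|_1$, the same relations yield $S(2n)=S(n)+[n\ \mathrm{odd}]$ and $S(2n+1)=S(n)+[n\ \mathrm{even}]$, from which $m_k:=\max_{n\le 2^k}S(n)$ satisfies $m_k+1\le m_{k+1}\le m_k+1$ (double a maximizer, adding $1$ if it is odd, otherwise pass to $2n+1$). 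Thus $m_k=\Theta(k)$, so $h_n$ is unbounded (it tends to $-\infty$ along a subsequence) yet grows only like $\log n$; consequently $h_n-an$ is unbounded for every rational slope $a$, and $w$ lies in no strip of rational slope. Note the blocks above have unbounded lengths $2^k$, consistent with $w$ being WAP but not bounded WAP. The only mild care needed here is the standard verification of uniform recurrence for the Toeplitz word and the bookkeeping in the $S$-recursion; both are routine.
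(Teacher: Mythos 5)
Your proposal is correct and follows essentially the same route as the paper: for part 1, your integer potential $D$ and the ``pinning'' factor of full range $M$ are exactly the paper's factor $w[x_1,x_2]$ joining a point on the extremal line $y=ax+b_1$ to one on $y=ax+b_2$, with uniform recurrence then forcing the level $d_-$ to recur with bounded gaps; for part 2 you use the same witness (the paperfolding word), the same frequency-$1/2$ cuts at dyadic prefixes, and the same $\Theta(\log n)$ growth of the letter discrepancy (the paper exhibits it via the explicit positions $n=2^k+2^{k-2}+\cdots$, you via the recursion for $S$). No gaps; the differences are only in bookkeeping.
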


\begin{proof}
1. Take graphic of $w$ with some vectors, e. g., $\textbf{v}_0=(-1,1)$ and
$\textbf{v}_1=(1,1)$. Bounded width means that the graphic $g_w$ satisfies
$ax+b_1 \leq g_w(x)\leq ax+b_2$ for some rational numbers $a, b_1,
b_2$, so that $ax+b_1 \leq g_w(x)\leq ax+b_2$. Take the biggest
such $b_1$ and the smallest $b_2$, i. e., there are integers $x_1$
and $x_2$ such that $g_w({x_1})=ax_1+b_1$, $g_w({x_2})=ax_2+b_2$.
Without loss of generality suppose $x_1\leq x_2$ and consider the
factor $w[x_1,x_2]$. Since $w$ is uniformly recurrent, this factor
occurs infinitely many times in it with bounded gap. Every
position $i$
 corresponding to an occurrence of this factor satisfies $g_w(i) =
 ai+b_1$, otherwise $g_w(i+x_2-x_1) > a (i+x_2-x_1) +b_2$, which
 contradicts the choice of $b_2$. Hence the word is bounded WAP
 along the line $y=ax+b_1$ (and moreover along $y=ax+b_2$ and any rational line in
 between).

\medskip

 2. One of such examples is the paperfolding word $w$. It can be
 defined in several equivalent ways, we define it as a Toeplitz word
 with pattern $0?1?$ \cite {CaK}. It is not difficult to see that $|{\rm pref}_{4^k-1}(w)|_0=4^{k}/2$, $|{\rm
 pref}_{4^k-1}(w)|_1=4^{k}/2-1$. So, the word is WAP with frequencies $\rho_0=\rho_1=\frac{1}{2}$ along
 the line $y=-1$. On the other hand, taking $n=2^k+2^{k-2}+ \dots +2^{k-2\lfloor\frac{k}{2}\rfloor}$,
 one gets $|{\rm pref}_{n}(w)|_0-|{\rm pref}_{n}(w)|_1=k+1$. So, the
 word is not of bounded width. \end{proof}

Next, we study the relation between WAP property and frequencies
of letters.

\begin{proposition} \label{freq}

\noindent 1. There exists an infinite word $w$ with rational
frequencies of letters which is not WAP.

\noindent 2. If an infinite word $w$ has irrational frequency of
some letters, then $w$ is not WAP.

\noindent 3. If a binary infinite word $w$ does not have
frequencies of letters, then $w$ is WAP.

\noindent 4. There exist a ternary infinite word $w$ which is does
not have frequencies of letters and which is not WAP.

\end{proposition}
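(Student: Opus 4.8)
The plan is to treat the four statements separately, relying throughout on the geometric reading of weak abelian periodicity: taking the graphic of a binary word $w$ with $(1,-1)$ for $0$ and $(1,1)$ for $1$, the prefix of length $n$ ends at height $D(n)=|{\rm pref}_n(w)|_1-|{\rm pref}_n(w)|_0$, and $w$ is WAP if and only if infinitely many of the vertices $(n,D(n))$ lie on a single line of rational slope (the common slope being $2\rho_1-1$ for the common block frequency $\rho_1$).

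For statement 2 I would argue directly: each block $v_i$ of a WAP factorization is a finite word, so $\rho_a(v_i)=|v_i|_a/|v_i|$ is rational, and by the remark preceding this section the frequency of $w$ (when it exists) equals this common value. Hence a WAP word that has letter frequencies has rational ones, which is the contrapositive of 2. For statement 1 I would exhibit a word whose frequencies exist and are rational but whose graphic escapes every horizontal line, e.g. $w=\prod_{k\ge 1}1^{k+1}0^{k}$, for which $\rho_0=\rho_1=\tfrac12$ while $D(n)\to+\infty$. Since the frequency is $\tfrac12$, the coincidence remark forces any WAP factorization of $w$ to have blocks of frequency $\tfrac12$, i.e. to be witnessed by a horizontal line; but $D(n)\to+\infty$ meets each horizontal line only finitely often, so $w$ is not WAP.

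Statement 3 is the heart of the matter. Absence of letter frequencies means $D(n)/n$ does not converge, so $a:=\liminf_n D(n)/n<\limsup_n D(n)/n=:b$. I would fix a rational $c=p/q\in(a,b)$ and pass to the integer sequence $\tilde E(n)=qD(n)-pn$. Because $a<c<b$ strictly, $\tilde E(n)\to-\infty$ along one subsequence and $\tilde E(n)\to+\infty$ along another, while the steps $\tilde E(n+1)-\tilde E(n)\in\{q-p,-q-p\}$ are bounded in absolute value by some $M$. The key lemma is then purely arithmetic: an integer sequence that is unbounded above and below and has steps bounded by $M$ must take some value infinitely often. I would prove it by inspecting the up-crossings of level $0$, i.e. the $n$ with $\tilde E(n)<0\le\tilde E(n+1)$: there are infinitely many of them, and each satisfies $\tilde E(n+1)\in\{0,1,\dots,M-1\}$, so by pigeonhole some $v^\ast$ occurs infinitely often. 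The indices with $\tilde E(n)=v^\ast$ satisfy $D(n)=cn+v^\ast/q$, hence lie on one rational-slope line, and $w$ is WAP. The main obstacle here is precisely the gap between the graphic \emph{crossing} a slanted line and \emph{landing} on one of its integer points; clearing the denominator to pass to $\tilde E$ and then counting up-crossings is what resolves it.

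For statement 4 I would build a ternary word on which no common block frequency can recur, exploiting the extra degree of freedom absent in the binary case. Take $w=\prod_{k\ge 0}(k\bmod 3)^{\ell_k}=0^{\ell_0}1^{\ell_1}2^{\ell_2}0^{\ell_3}\cdots$ with $\ell_k$ super-increasing, say $\ell_k$ much larger than $L_{k-1}=\sum_{j<k}\ell_j$. Each letter's prefix frequency then returns near $1$ at the end of its own block and near $0$ after the next block, so no letter frequency exists. For non-WAP I would suppose, for contradiction, that infinitely many prefix endpoints $P_{n_i}=(|{\rm pref}_{n_i}(w)|_0,|{\rm pref}_{n_i}(w)|_1,|{\rm pref}_{n_i}(w)|_2)$ lie on a line of rational direction $\rho$; since the coordinates sum to $n$, one gets $P_{n_i}=P_{n_1}+(n_i-n_1)\rho$ exactly. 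Inside a block of direction $c$ the two coordinates $b\neq c$ are frozen at $\beta_b^{(k)}=\sum_{j<k,\ j\equiv b\,(3)}\ell_j$, which forces, for each block meeting the line, the relation $(\beta^{(k)}_{b_1}-A_{b_1})\rho_{b_2}=(\beta^{(k)}_{b_2}-A_{b_2})\rho_{b_1}$, where $A_b=|{\rm pref}_{n_1}(w)|_b$ and $b_1,b_2$ are the letters $\neq c$. The subtle point, and the real obstacle for 4, is to rule this relation out for all large $k$: the super-increasing growth makes $\beta^{(k)}_{b_1}$ and $\beta^{(k)}_{b_2}$ differ by an unbounded factor, since the larger one contains the dominant term $\ell_{k-1}$, so the two sides have incomparable magnitudes and equality fails; the degenerate cases where $\rho$ has a zero coordinate are handled by noting that a frozen coordinate cannot stay equal to a constant as $k$ grows. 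Thus only finitely many blocks can meet the line, contradicting the infinitude of the $n_i$, and $w$ is not WAP.
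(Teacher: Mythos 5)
Your proposal is correct, and parts 1--3 follow essentially the paper's route: part 2 is the same rationality-of-block-frequencies observation; part 1 is the same scheme (a word with frequency $\tfrac12$ whose prefix discrepancy tends to $+\infty$, so no horizontal line is hit infinitely often), only with $\prod_k 1^{k+1}0^k$ in place of the paper's $01\,0\,0101\,0\,(01)^4\cdots$; and part 3 is the paper's $\liminf$/$\limsup$ argument, except that you make explicit the step the paper only gestures at (``hence there are infinitely many integer points on it or its shift''): clearing denominators to get the integer sequence $\tilde E(n)=qD(n)-pn$ with bounded steps and applying pigeonhole to the up-crossing values in $\{0,\dots,M-1\}$ is a genuine improvement in rigor at exactly the delicate point. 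Part 4 is where you diverge substantively. The paper takes Fibonacci-type block lengths $n_i=n_{i-1}+n_{i-2}$, observes that infinitely many factorization points must fall in blocks of one letter, and derives a contradiction from the irrationality of the limiting ratio $|{\rm pref}_{k_n}w|_0/|{\rm pref}_{k_n}w|_1\to 1/\lambda_1$ with $\lambda_1=(1+\sqrt5)/2$ against the rational $\rho_0/\rho_1$. You instead take super-increasing block lengths and exploit that within a block two Parikh coordinates are frozen while the collinearity constraint $P_{n_i}=P_{n_1}+(n_i-n_1)\rho$ ties them together, killing the relation for large $k$ because the two frozen coordinates grow at incomparable rates (and handling the degenerate $\rho_b=0$ case by monotone growth of the frozen coordinate). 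Your version is more elementary --- it avoids any irrationality input and the limit computation --- and is more robust to the choice of growth rate; the paper's is shorter given the specific recurrence. Both are valid.
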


\begin{proof}

1. Consider $$w=01 0 0101 0 (01)^4 \dots 0 (01)^{2^n} \dots$$ This
word has letter frequencies $\rho_0=\rho_1=1/2$. Suppose it is
weak abelian periodic. If a word has frequencies of letters and is
WAP, then these frequencies coincide with frequencies of letters
in the corresponding factorization. So, if $w$ is WAP, then there
is a sequence $k_1, k_2, \dots$ (the sequence of lengths of
factors in the corresponding factorization), such that $|{\rm
pref}_{k_i} w|_0=k_i/2+C$, where $C$ is defined by the first
factor of length $k_1$: $C=k_1/2-|{\rm pref}_{k_1} w|_0/2$. For
the word $w$, the number of $0$-s in a prefix of length $n$ is
$|{\rm pref}_{n} w|_0 = n/2+\theta (\log n)$. For $n=k_i$ large
enough one has $\theta (\log n)>C$, a contradiction. So, $w$ is
not WAP.

For uniformly recurrent examples see Section 5.

\medskip

2. Assume that the word $w$ is WAP, then for every letter $a$
there exists a rational partial limit $\lim_{n_k\to \infty}
\frac{|{\rm pref}_{n_k} (w)|_a}{|{\rm pref}_{n_k} (w)|}$. For $w$
having irrational frequency of some letter all such partial limits
corresponding to this letter exist and are equal to this
irrational frequency. A contradiction.

\medskip

3. Consider a sequence $(\frac{|{\rm pref}_{n} (w)|_a}{|{\rm
pref}_{n} (w)|})_{n\geq 1}$. This sequence is bounded, and has a
lower and upper partial limits
$r=\underline{\lim}_{n\to\infty}\frac{|{\rm pref}_{n}
(w)|_a}{|{\rm pref}_{n} (w)|} $ and $R=
\overline{\lim}_{n\to\infty}\frac{|{\rm pref}_{n} (w)|_a}{|{\rm
pref}_{n} (w)|}$. Since the sequence does not have a limit, these
partial limits do not coincide: $r<R$. Using graphic of $w$, one
gets that the graphic intersects every line with slope
corresponding to frequency between $r$ and $R$. For rational
frequencies one gets that the graphic intersects the line
infinitely many times. Hence there are infinitely many integer
points on it (or its shift, depending on the choice of $v_0$ and
$v_1$). So, we proved that $w$ is WAP, and moreover, it is WAP
with any rational frequency $\rho$, $r<\rho<R$ in factors in
corresponding factorization.

\medskip

4. Consider a word $$w=012 01^2 2^4 0^6 1^{10} 2^{16} \dots
0^{n_i}1^{n_{i+1}}2^{n_{i+2}} \dots ,$$ where
$n_i={n_{i-1}+n_{i-2}}$ for every $i\geq 5$, $n_1=n_2=n_3=n_4=1$.
The word is organized in a way that after each block $a^{n_i}$ the
frequency of the letter $a$ in the prefix ending in this block is
equal to $1/2$, i. e., $\rho_a(012 0 1^2 2^4 \dots
a^{n_i})=\frac{1}{2}$ for $a \in \{0, 1, 2\}$. So, frequencies of
letters do not exist.

Now we will prove that it is not weak abelian periodic. Suppose it
is, with points $k_1, k_2 \dots $ and rational frequencies
$\rho_0, \rho_1, \rho_2$ in the blocks, i. e. $w=w_1 w_2 \dots$,
and $|w_1 \dots w_n|=k_n$ and $\frac {|w_i|_a}{|w_i|}=\rho_a$ for
every $a\in \{0,1,2\}$ and $i>1$. By the pigeonhole principle
there exists a letter $a$ such that infinitely many $k_i$ are in
the blocks of $a$-s, meaning that at least one of the letters
$w_{k_i}$, $w_{k_{i}+1}$ is $a$. Without loss of generality
suppose $a=2$. Using the recurrence relation for $n_i$, one can
find $\lim_{n\to \infty} \frac {|{\rm pref}_{k_n} w|_0} {|{\rm
pref}_{k_n} w|_1}=\frac{1}{\lambda_1}$, where
$\lambda_1=\frac{1+\sqrt{5}}{2}$ is the larger root of the
equation $\lambda^2=\lambda+1$ corresponding to the recurrence
relation. So, the limit is irrational, and hence $w$ cannot be
equal to $\frac {\rho_0}{\rho_1}$. Thus, $w$ is not WAP.
\end{proof}

So, we obtain the following corollary:

\begin{corollary}
If a binary word $w$ is not WAP, then it has frequencies of
letters.
\end{corollary}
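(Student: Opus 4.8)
The plan is to obtain this as nothing more than the contrapositive of part~3 of Proposition~\ref{freq}. That part asserts that every binary word \emph{lacking} letter frequencies is WAP; negating both hypothesis and conclusion yields precisely the assertion that a non-WAP binary word \emph{must} possess letter frequencies.

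First I would record the binary reduction that makes ``has frequencies of letters'' a single condition. Over a two-letter alphabet $\rho_0({\rm pref}_n(w)) + \rho_1({\rm pref}_n(w)) = 1$ for every $n$, so the frequency of $0$ exists if and only if the frequency of $1$ exists, and either failure amounts to the sequence $(\rho_0({\rm pref}_n(w)))_{n\geq 1}$ having no limit. This is exactly the hypothesis under which Proposition~\ref{freq}(3) produces a WAP factorization.

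With this in hand the conclusion is immediate: if $w$ is not WAP, then the sequence $(\rho_0({\rm pref}_n(w)))_{n\geq 1}$ cannot fail to converge, for otherwise Proposition~\ref{freq}(3) would force $w$ to be WAP. Hence $\rho_0(w)$ exists, and therefore so does $\rho_1(w) = 1 - \rho_0(w)$, giving the frequencies of both letters. Since the whole argument is pure contraposition of an already established statement, there is no genuine obstacle; the only point worth a remark is the binary reduction above, which certifies that the single failure mode handled by Proposition~\ref{freq}(3) exhausts all the ways in which letter frequencies can fail to exist in the binary case.
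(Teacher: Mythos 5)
Your proof is correct and is exactly what the paper intends: the corollary is stated immediately after Proposition~\ref{freq} as a direct consequence, namely the contrapositive of part~3. Your added remark that over a binary alphabet the existence of the frequency of one letter is equivalent to that of the other is a harmless and accurate clarification, not a deviation.
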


This simple corollary, however, is unexpected: from the first
glance weak abelian periodicity and frequencies of letters seem to
be very close notions. But it turns out that one of them (WAP)
does not hold, then the other one should necessarily hold.

\medskip

We end this section with an observation about WAP of non-binary words. We will show that contrary to normal and abelian periodicity, the property WAP cannot be checked from binary words obtained by unifying letters of the original word. 

For a word $w$ over an alphabet of cardinality $k$ define
$w^{a\cup b}$ as a word over an alphabet of cardinality $k-1$
obtained form $w$ by unifying letters $a$ and $b$. In other words,
$w^{a\cup b}$ is an image of $w$ under a morphism $b\to a$, $c\to
c$ for every $c\neq b$.

\begin{proposition}
There exists a ternary word $w$, such that $w^{0\cup 1}$,
$w^{0\cup 2}$, $w^{1\cup 2}$ are WAP, and $w$ itself is not WAP.
\end{proposition}

\begin{proof} We use the example we built in the proof of Proposition \ref{freq} (3), i. e., we take
$w=012 01^2 2^4 0^6 1^{10} 2^{16} \dots
0^{n_i}1^{n_{i+1}}2^{n_{i+2}} \dots $, where
$n_i={n_{i-1}+n_{i-2}}$ for every $i$. Due to space limitations,
we omit the calculations.
\end{proof}

\section{Weak abelian periodicity of fixed points of binary uniform morphisms}

In this section we study weak abelian periodicity of fixed points
of non-primitive uniform binary morphisms.

Consider a binary uniform morphism $\varphi$ with matrix $\left(
\begin{array}{cc}
a & b \\
c & d \end{array} \right)$. This means that $|\varphi(0)|_0=a$,
$|\varphi(0)|_1=b$, $|\varphi(1)|_0=c$, $|\varphi(1)|_1=d$, and
$a+b=c+d=k$, since we consider a uniform morphism. In a fixed
point $w$ of the binary uniform morphism $\varphi$ the frequencies
exist and they are rational. It is easy to see that
$\rho_0(w)=\frac{c}{b+c}$, $\rho_1(w)=\frac{b}{b+c}$. It will be
convenient for us to consider a geometric interpretation with
$\textbf{v}_0=(1,-b)$, $\textbf{v}_1=(1,c)$. If $w$ is WAP, then the frequency
inside the blocks is equal to the frequency in the whole word. So,
WAP can be reached along a horizontal line $y=C$.

The following theorem gives a characterization of weak abelian
periodicity for fixed points of non-primitive binary uniform
morphisms:

\begin{theorem} \label{morph}

Consider a non-primitive binary uniform morphism $\varphi$ with
matrix $\left(
\begin{array}{cc}
a & b \\
c & d \end{array} \right)$ having a fixed point $w$ starting in
$0$. For any $u\in \{0, 1\}^*\cup \{0, 1\}^{\infty}$ let $g_u$ be
its graphic with vectors $\textbf{v}_0=(1,-b)$, $\textbf{v}_1=(1,c)$.

\noindent 1. If $g_{\varphi (0)}(x)=0$ for some $x$, $0< x \leq
k$, then $w$ is WAP.

\noindent 2. If $g_{\varphi (0)}(k)\geq -b$, then $w$ is WAP.

\noindent 3. Otherwise we need the following parameters. Denote
$\Delta = g_{\varphi (0)}(k)$, $A = \max \{g_{\varphi (0)}(i) |
i=1, \dots k, w_i=1 \}$, $t=\max \{g_{\varphi (1)}(i) | i=1, \dots
k, w_i=1 \}$.

If $\varphi$ does not satisfy conditions 1 and 2, then its fixed
point $w$ is WAP if and only if $\Delta\frac{A-c}{-b}+t\geq A$.

\end{theorem}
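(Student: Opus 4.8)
The plan is to work entirely with the graphic $g_w$ for the vectors $\textbf{v}_0=(1,-b)$, $\textbf{v}_1=(1,c)$, for which the frequency of $w$ is encoded by the horizontal direction. Since $w$ has the rational frequency $\rho_0=\frac{c}{b+c}$, every block of a weak abelian factorization must carry exactly this frequency, hence have zero net vertical displacement; so $w$ is WAP if and only if there is a level $C$ with $g_w(n)=C$ for infinitely many integers $n$. First I would record the self-similarity: reading $\varphi$ scales the net vertical displacement by $\delta:=a-c$ (a short computation gives net displacement $-b\delta$ over $\varphi(0)$ and $c\delta$ over $\varphi(1)$), so $g_w(kn)=\delta\,g_w(n)$ for all $n$. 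In case 3 the failure of condition 2 gives $\delta\ge 2$, and the failure of condition 1 forces the path of $\varphi(0)$ to stay strictly below the horizontal axis; hence $g_w(k^{n})=-b\delta^{n}\to-\infty$, so $g_w$ is unbounded below. Using that a bounded-step integer sequence attains its finite limit superior infinitely often, together with a crossing argument when it is unbounded above, this reduces the whole problem to the single question: \emph{is $\limsup_n g_w(n)>-\infty$?}

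Next I would analyse the peaks (strict local maxima) of $g_w$. Each peak is reached by an up-step and followed by a down-step, so it sits on a letter $1$ and is followed by a $0$; the highest peak of $\varphi(0)$ is exactly $A$, and the highest relative peak inside a block $\varphi(1)$ is $t$. Using $w=\varphi(w)$ I would describe how peaks refine: a peak at level $\lambda$ lies on a letter $1$ whose image $\varphi(1)$, one scale finer, is based at height $\delta(\lambda-c)$ and climbs to $\delta(\lambda-c)+t$, while the following letter $0$ has image $\varphi(0)$ based at $\delta\lambda$ and reaching $\delta\lambda+A$. Thus the highest child of a peak at level $\lambda$ is $G(\lambda):=\delta\lambda+\max(A,\,t-\delta c)$, an affine map of slope $\delta\ge 2$ with unique fixed point $x^{\sharp}=\max(A,\,t-\delta c)/(1-\delta)$.

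The key structural claim is then that $\limsup_n g_w(n)\in\{-\infty,\,x^{\sharp},\,+\infty\}$. Indeed, if it equals a finite number $L$, then $L$ is hit infinitely often by peaks; the highest recurrent level followed by a $0$ is $L$ and the highest recurrent level followed by a $1$ is $L-c$, and feeding these through the block decomposition yields $L=\max(\delta L+A,\ \delta(L-c)+t)=G(L)$, forcing $L=x^{\sharp}$. The one peak anchored at the origin (coming from the initial $\varphi(0)$) is transient and sits at $A$, whereas all recurrent peaks evolve by iterating $G$. Comparing the seed $A$ with $x^{\sharp}$ gives the trichotomy: if $A>x^{\sharp}$ the iterates $G^{n}(A)\to+\infty$, so $g_w$ is unbounded above and WAP; if $A=x^{\sharp}$ there are infinitely many peaks at the single level $A$, again WAP; and if $A<x^{\sharp}$ every recurrent peak drifts to $-\infty$ (a finite $\limsup$ is impossible, as it would equal $x^{\sharp}>A\ge\sup_n g_w(n)$), so $w$ is not WAP. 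Hence WAP holds exactly when $A\ge x^{\sharp}$, equivalently (for affine $G$) when $G(A)\ge A$; since $A<0$ kills the spurious branch $(\delta+1)A\ge A$, this is just $\delta(A-c)+t\ge A$, and as $\Delta=g_{\varphi(0)}(k)=-b\delta$ it is precisely $\Delta\frac{A-c}{-b}+t\ge A$. Parts 1 and 2 I would dispatch first and separately by exhibiting an explicit recurrent level: when $\varphi(0)$ returns to the axis (part 1), or when $\delta\le 1$ so the self-similarity no longer amplifies heights (part 2), the graphic revisits a fixed level infinitely often.

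The main obstacle is the recurrence/transience bookkeeping in the third step: one must cleanly separate the single transient peak anchored at the origin from the genuinely recurrent peaks, justify that the highest child of a coarse peak is given by $G$ (that no block sitting on a lower base overtakes the two blocks adjacent to the peak), and prove the ``not WAP'' direction, namely that $A<x^{\sharp}$ leaves no level at all visited infinitely often. The cleanest route I see for the last point is the fixed-point identity $L=G(L)$ for any finite $\limsup$, which makes the impossibility immediate; verifying the peak-refinement recursion rigorously over all blocks is the delicate part.
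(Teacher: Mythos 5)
Your proposal follows essentially the same route as the paper: the same geometric reduction (WAP can only occur along horizontal lines $y=C$ because the frequencies are forced), and the same key mechanism --- a $1$-position at height $\lambda$ produces under $\varphi$ a $1$-position at height $\delta(\lambda-c)+t$ with $\delta=a-c$, which is compared with $A$; your affine map $G$ and its fixed point $x^{\sharp}$ are a repackaging of the paper's two propagation claims (heights $\geq A$ persist when $\Delta\frac{A-c}{-b}+t\geq A$, and the maxima over $1$-positions strictly decrease otherwise). The delicate point you flag yourself --- controlling the blocks not adjacent to a peak in the non-WAP direction --- is exactly the step the paper also dispatches with ``proved in a similar way,'' so the proposal is correct and at the same level of detail as the published argument.
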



\begin{proof} 

1. If in the condition $g_{\varphi (0)}(x)=0$, $0< x \leq k$, the
number $x$ is integer, then for every $i$ it holds
$g_{\varphi^i(0)}(k^{i-1}x) = 0$, so the word is WAP. If $x$ is
not integer, then we have either $g_{\varphi (0)}(\lfloor x
\rfloor)<0$ and  $g_{\varphi (0)}(\lceil x \rceil)>0$ or
$g_{\varphi (0)}(\lfloor x \rfloor)>0$ and  $g_{\varphi
(0)}(\lceil x \rceil)<0$. Without loss of generality consider the
first case. For any $i$, one has $g_{\varphi^i (0)}(k^{i-1}\lfloor
x \rfloor)<0$ and  $g_{\varphi^i (0)}(k^{i-1}\lceil x \rceil)>0$,
so there exists $x_i$, $k^{i-1}\lfloor x \rfloor < x_i <
k^{i-1}\lceil x \rceil$, such that $g_{\varphi^i (0)}(x_i)=0$. So,
we have an infinite sequence of points $(x_i)_{i=1}^{\infty}$ such
that $g_w(x_i)=0$. By the definition of $g_w$ and the pigeonhole
principle
 we get that there is an infinite number of integer points from the set $\lfloor x_i\rfloor, \lceil x_i \rceil$, $i=1, \dots, \infty$, on one of the
 lines $x=A$, $A=-\max(b,c)+1, -\max(b,c)+2, \dots, \max(b,c)-1$.
 So, $w$ is WAP.

\medskip

2. If $g_{\varphi (0)}(k)\geq 0$, we are in the conditions of the
case 1, so the word is WAP. If $0>g_{\varphi (0)}(k)\geq -b$, then
the only possible case is $g_{\varphi (0)}(k)=-b$. This follows
from the fact that the condition $0>g_{\varphi (0)}(k)\geq -b$
means that $a>c$, or, equivalently, $a-c \geq 1$, and so
$g_{\varphi(0)}(k)=a(-b)+bc=-b(a-c)\geq -b$. Hence $c=a-1$, and so
$g_{\varphi^i (0)}(k^{i}) = -b$, and thus $w$ is WAP along the
line $y=-b$.

\medskip

3. Suppose that $\Delta\frac{A-c}{-b}+t\geq A$. We need to prove
that $w$ is WAP.

Let $j$ be such that $g_{\varphi(1)}(j)=t$. Under these conditions
we will prove the following claim: If for some $m$ one has $w_m=1$
and $g_w(m)\geq A$, then $w_{km+j}=1$ and $g_w(k(m-1)+j)\geq A$.

Consider the occurrence of $1$ at the position $m$. By the
definition of the graphic of $w$, one has that $g_w(m-1)\geq A-c$,
and hence ${\rm pref}_{m-1}(w)$ contains at least
$\frac{c}{b+c}(m-1)-\frac{1}{b+c} (A-c)$ letters $0$ and at most
$\frac{b}{b+c}(m-1)+\frac{1}{b+c} (A-c)$ letters $1$. So, for the
image of this prefix one has $g_w(k(m-1))\geq \Delta
\frac{A-c}{-b}$. Since $w_m=1$, one has $w[k(m-1)+1,
km]=\varphi(1)$. Then $g_w(k(m-1)+j)=g_w(k(m-1))+t\geq \Delta
\frac{A-c}{-b}+t$, and we have $\Delta\frac{A-c}{-b}+t\geq A$, and
so $g_w(k(m-1)+j)\geq A$. The claim is proved.

Now consider the occurrence of $1$ corresponding to the value $A$
defined in the theorem, i. e., we consider $w_i=1$ such that
$g_w(i)=A$. Applying the claim we just proved to $m=i$ we have
$w_{k(i-1)+j}=1$, $g_w(k(i-1)+j)\geq A$. Now we can apply the
claim to $m=k(i-1)+j$ and get that $w_{k(k(i-1)+j)+j}=1$,
$g_w(k(k(i-1)+j))\geq A$. Continuing this line of reasoning, one
gets infinitely many positions $n$ for which $g_w(n)\geq A$. On
the other hand, it is easy to see that $g_w(k^l)<0$ for all
integers $l$. So, $w$ is WAP along one of the lines $y=C$, $A-\max
(b,c)+1 \leq C \leq \max (b,c)-1$. Additional $\pm \max (b,c)$ are
taken to guarantee integer points, since the graphic "jumps" by
$b$ and $c$.

\medskip

Now suppose that $\Delta\frac{A-c}{-b}+t < A$. We need to prove
that $w$ is not WAP.

Let $j$ be such that $g_{\varphi(1)}(j)=t$. Under these conditions
we prove the following claim: If for all $m$ in a prefix of $w$ of
length $N$ such that $w_m=1$ one has $g_w(m) \leq A$, then for all
$N+1 \leq l \leq Nk$ such that $w_{l}=1$ we have $g_w(l) <
\max_{m} \{ g_w(m) | 1\leq m\leq N, w_m=1\}$, or, equivalently,
$g_w(l) \leq \max_{m}\{ g_w(m)-1 | 1\leq m\leq N, w_m=1\}$.
Roughly speaking, the claim says that maximal values are
decreasing. The claim is proved in a similar way as the previous
claim, so we omit the proof.

Now consider occurrences of $1$ from $\varphi(0)$, i. e., we
consider $w_i=1$ such that $1\leq i \leq k$. By the conditions of
the part 3 of the theorem we have $g_w(i) \leq A$. Applying the
latter claim to $m=i$ we have that for all occurrences $l$ of $1$
in $w[k+1, k^2]$ it holds $g_w(l) \leq A-1$. By the definition of
the graphic $g_w$, maximal values are attained immediately after
the occurrences of $1$-s, so we actually have $g_w(l) \leq A-1$
for all $k+1\leq l\leq k^2$. Continuing this line of reasoning, we
get that for $k^n+1 \leq i \leq k^{n+1}$
 it holds $g_w(l) \leq A-n$. So, the word $w$ is not WAP (since $w$ can be WAP only along
 horizontal lines).

\end{proof}

Now we are going to show that a fixed point of a uniform morphism
is bounded WAP iff it is abelian periodic. This is probably known
or follows from some general characterizations of balance of
morphic words (e. g., \cite {a}), but we anyway provide a short
combinatorial proof to be self-contained.

\begin{theorem}
Let $w$ be a fixed point of binary $k$-uniform morphism $\varphi$.
The following are equivalent:

\noindent 1. $w$ is bounded WAP

\noindent 2. $w$ is abelian periodic

\noindent 3. $\varphi(0)\sim_{ab}\varphi(1)$ or $k$ is odd and
$\varphi (0) = (01)^{\frac{k-1}{2}} 0$, $\varphi (1) =
(10)^{\frac{k-1}{2}} 1$.
\end{theorem}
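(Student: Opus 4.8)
The plan is to prove the cycle $(2)\Rightarrow(1)\Rightarrow(3)\Rightarrow(2)$, working throughout with the graphic $g_w$ in the coordinates $\textbf{v}_0=(1,-b)$, $\textbf{v}_1=(1,c)$ fixed before the theorem, in which the average slope is $0$, so that ``bounded width'' means exactly that $g_w$ stays between two horizontal lines. The implication $(2)\Rightarrow(1)$ is immediate: abelian equivalent blocks have equal length, so an abelian periodic factorization automatically has bounded block lengths. For $(3)\Rightarrow(2)$ I would treat the two alternatives separately. If $\varphi(0)\sim_{ab}\varphi(1)$, then the natural factorization $w=\varphi(w_1)\varphi(w_2)\cdots$ consists of abelian equivalent blocks, so $w$ is abelian periodic. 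In the exceptional case I would observe that $\varphi(0)=(01)^{(k-1)/2}0$ is a prefix of $(01)^{\omega}$ and that $\varphi$ fixes $(01)^{\omega}$, so the fixed point starting in $0$ must be $(01)^{\omega}$, which is purely periodic and hence abelian periodic.

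The real content is $(1)\Rightarrow(3)$, which I would prove in contrapositive form as a dichotomy governed by the integer $\delta=c-a=b-d$ (the second eigenvalue, up to sign, of the incidence matrix). The key is the self-similarity identity $g_w(kn)=-\delta\,g_w(n)$, obtained by computing the net rises $b\delta$ and $-c\delta$ of $\varphi(0)$ and $\varphi(1)$ and using $w=\varphi(w)$. Here $\delta=0$ is exactly the condition $\varphi(0)\sim_{ab}\varphi(1)$, i.e.\ the first alternative of (3). Assuming that the fixed point contains both letters (so $b\ge 1$; the degenerate constant or eventually periodic fixed points are trivial exceptions), if $|\delta|\ge 2$ then $|g_w(k^m)|=|\delta|^m b\to\infty$, so $g_w$ is unbounded, $w$ is not of bounded width, and hence (since bounded WAP implies bounded width) $w$ is not bounded WAP. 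This disposes of every morphism outside alternatives (i) and (ii) except the borderline case $|\delta|=1$.

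The hard part is precisely this borderline case, where $g_w(kn)=\pm g_w(n)$ keeps block boundaries bounded and the deviation can grow only through accumulation of internal fluctuations. Applying the argument below to $\varphi^{2}$ (whose parameter equals $-1$) lets me reduce the subcase $\delta=+1$ to $\delta=-1$: the conclusion would force $w=(01)^{\omega}$, whence $\varphi(0)={\rm pref}_k(w)$ would have $\delta\in\{0,-1\}$, contradicting $\delta=+1$, so $\delta=+1$ always gives an unbounded $g_w$. It remains to treat $\delta=-1$, where $g_w(kn)=g_w(n)$. Assuming $g_w$ bounded, I would extract four constraints on the internal graphics of the images, writing $P_j=\max g_{\varphi(j)}$ and $Q_j=\min g_{\varphi(j)}$. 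A position argument at a point of maximal height, which under $n\mapsto kn$ becomes the left endpoint of a level-one block at the same height, forces $P_0\le 0$ and symmetrically $Q_1\ge 0$. A telescoping argument gives the other two: if $\varphi(0)$ dipped below its endpoint, i.e.\ $Q_0<-b$, then substituting at the deepest point (which follows a down-step) shows that the internal minimum of $\varphi^{m}(0)$ decreases by at least $|Q_0+b|>0$ at each level and diverges, contradicting boundedness; hence $Q_0\ge -b$, and symmetrically $P_1\le c$. Together these confine the graphic of $\varphi(0)$ to the strip $[-b,0]$ and that of $\varphi(1)$ to $[0,c]$.

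Finally I would read off the structure geometrically. Since the down-steps have size exactly $b$, equal to the height of the strip $[-b,0]$, every occurrence of the letter $0$ must start from height $0$; consequently consecutive $0$'s are separated by exactly $b/c$ letters $1$, which forces $c\mid b$, and the symmetric analysis of $\varphi(1)$ in $[0,c]$ forces $b\mid c$. Hence $b=c$, so $k=2b+1$ is odd, and the strip constraints then pin down $\varphi(0)=(01)^{(k-1)/2}0$ and $\varphi(1)=(10)^{(k-1)/2}1$, i.e.\ alternative (ii). I expect the borderline analysis to be the main obstacle: it is exactly the combination of the two ``strip'' constraints and the resulting divisibilities $c\mid b$ and $b\mid c$ that rules out every non-alternating morphism with $|\delta|=1$, and getting the position and telescoping arguments to cooperate cleanly (rather than just bounding one side) is the delicate point.
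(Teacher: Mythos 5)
Your proof is correct and its skeleton coincides with the paper's: both prove $1\Rightarrow 3\Rightarrow 2\Rightarrow 1$ with all the content in $1\Rightarrow 3$, both reduce first to $|a-c|=1$ by showing the deviation grows geometrically otherwise, both use the $\varphi^2$ trick to normalize the sign of $a-c$, and both finish by pinning down the positions of the $0$'s in $\varphi(0)$ and extracting the divisibility conditions that force $b=c$. The difference is one of coordinates rather than of substance: your strip constraints $g_{\varphi(0)}\in[-b,0]$, $g_{\varphi(1)}\in[0,c]$ are, up to the linear change of variables $m=-g/b$, exactly the paper's statement that every prefix and every suffix of $\varphi(0)$ has between $0$ and $1$ ``extra'' $0$'s. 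That said, a few of your sub-arguments are genuinely cleaner than the paper's: the explicit self-similarity identity $g_w(kn)=-\delta\,g_w(n)$ makes the case $|a-c|\ge 2$ a one-line eigenvalue computation (the paper iterates the extra-zero count instead); the constraints $P_0\le 0$ and $Q_1\ge 0$ are obtained by positioning at a global extremum of the bounded graphic rather than by a second iteration argument; and you actually derive a contradiction in the case $a<c$ from the $\varphi^2$ reduction, where the paper only gestures at it. One caveat applies to both proofs equally: degenerate morphisms with $b=0$ or $c=0$ (e.g.\ $0\to 01$, $1\to 11$, with fixed point $01^{\omega}$) are literal counterexamples to the equivalence as stated, since $01^{\omega}$ is abelian periodic but satisfies neither alternative of condition~3; you at least flag these as exceptions, which the paper does not.
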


\begin{proof} We prove the theorem in the following way. Starting with bounded
WAP word $w$, we step by step restrict the form of $w$ and prove
that the morphism should satisfy either
$\varphi(0)\sim_{ab}\varphi(1)$ or $k$ is odd and $\varphi (0) =
(01)^{\frac{k-1}{2}} 0$, $\varphi (1) = (10)^{\frac{k-1}{2}} 1$.
These conditions clearly imply abelian periodicity, and abelian
periodicity implies bounded WAP. So, we actually prove $1
\Rightarrow 3 \Rightarrow 2 \Rightarrow 1 $, and the only
implication to be proved is $1 \Rightarrow 3$.

Suppose that $w$ is bounded WAP and $\varphi(0)$ is not abelian
equivalent to $\varphi(1)$, i. e., $a\neq c$. Without loss of
generality we may assume that the fixed point starts in $0$ and
that $a>c$. If $a<c$, we consider a morphism $\varphi^2$, so that
one has $g_{\varphi^2(0)}\leq 0$. We will prove that either the
fixed point is not of bounded width or the morphism is of the form
$\varphi (0) = (01)^{\frac{k-1}{2}} 0$, $\varphi (1) =
(10)^{\frac{k-1}{2}} 1$, $k$ odd.

In the proof we will use the following notation. For a factor $u$
of $w$ such that $\rho_0(u)>\rho_0(w)$, we say that $u$ has $m$
\emph{extra} $0$'s, if $\frac{|u|_0-m}{|u|-m}=\rho_0(w)$. In other
words, deleting $m$ letters $0$ from $u$ gives a word with
frequency $\rho_0(w)$. We also admit non-integer values of $m$. E.
g., if $\rho_0(w)=\frac{1}{3}$ and $u =01$, then $u$ has $\frac{1}{2}$ extra $0$'s.

Suppose $a>c+1$. In this case $\varphi^i(0)$ contains $(a-c)^i$
extra zeros. Since $(a-c)^i$ increases as $i$ increases, $w$ is
not of bounded width. So, the fixed point is not bounded WAP in
this case, and hence for bounded WAP one should have $a=c+1$.

Suppose that $\varphi(0)$ has a prefix $x$ with more than one
extra zero. Without loss of generality we assume that $x$ ends in
$0$, otherwise we may take a smaller prefix. So, $x=x'0$, and $x'$
has $m>0$ extra $0$-s. It is not difficult to show that under
condition $a=c+1$ the image $\varphi(x')$ also contains $m$ extra
$0$. An image of $x$ starts in $\varphi(x')x'0$. An image of this
word starts in $\varphi^2(x')\varphi(x')x'0$. Continuing taking
images, we get that for every $i$ the word $w$ has a prefix of the
form $\varphi^i(x')\varphi^{i-1}(x') \dots \varphi(x')x'0$. This
word contains $(i+1)m+1$ extra $0$-s, and this amount grows as $i$
grows. Hence $w$ word is not of bounded width, a contradiction.
So, we have that every prefix of $\varphi(0)$ has at most one
extra $0$, in particular, $\varphi(0)$ starts in $01$.

In a similar way we show that every suffix of $\varphi(0)$ has at
most one extra $0$. The only difference is we obtain a series of
factors (not prefixes) of $w$ with growing amount of extra $0$-s.

Now consider an occurrence of $0$ in $\varphi(0)$, i. e., $w_j=0$,
$1\leq j\leq k$. Due to what we just proved $\rho_0({\rm
pref}_{j-1}(\varphi(0))\geq \rho_0(w)$, and $\rho_0({\rm
suff}_{k-j}(\varphi(0))\geq \rho_0(w)$. Since $\varphi(0)$ has one
extra $0$, we have  $\rho_0({\rm
pref}_{j-1}(\varphi(0))=\rho_0({\rm suff}_{k-j}(\varphi(0)) =
\rho_0(w)$. So, $w_j$ can be equal to $0$ only if in the prefix
${\rm pref}_{j-1}(\varphi(0))$ the frequency of $0$ is the same as
in $w$.

On the other hand, if the frequencies in the ${\rm
pref}_{j-1}(\varphi(0))$ are the same as in $w$, then $w_j$ cannot
be equal to $1$. Suppose the converse; let $w_j=1$, then all
$w_l=1$, $l=j, \dots, k-1$, since by induction in all the prefixes
${\rm pref}_{l}(\varphi(0))$ the frequency of $0$ is less than
$\rho_0(w)$. So, in $\varphi(0)$ there will be less than one extra
$0$, a contradiction.

Thus, each time we have $\rho_0({\rm pref}_{j-1}(\varphi(0))=
\rho_0(w)$, we necessarily have $w_j=0$, otherwise $w_j=1$. Since
$|\varphi(0)|_0=a$, the frequency $\rho_0(w)$ is reached $a$
times, and $\varphi(0)$ consists of $a-1$ blocks with one $0$ and
with frequency $\rho_0(w)$, and one extra block $0$. Therefore,
$a-1$ divides $a-1+b$, i. e., $b=i(a-1)$ for some integer $i$. By
a similar argument applied for $\varphi(1)$ we get that $d-1$
divides $c-1$, which means $i(a-1)$ divides $a-1$. Hence $i=1$,
and so the matrix of the morphism is $\left( \begin{array}{cc}
a & a-1 \\
a-1 & a \end{array} \right)$. Combining this with the conditions
for positions of $0$ in $\varphi(0)$, we obtain
 $\varphi (0)
= (01)^{\frac{k-1}{2}} 0$, $\varphi (1) = (10)^{\frac{k-1}{2}} 1$.

\end{proof}


\section{On WAP of points in a shift orbit closure}

In this section we consider the following question: if a uniformly
recurrent word $w$ is WAP, what can we say about WAP of other
words with the language $F(w)$?

As a corollary from Theorem \ref{morph} we obtain the following
Proposition:

\begin{proposition} There exists a binary uniform morphism having
two infinite fixed points, such that one of them is WAP, and the
other one is not.
\end{proposition}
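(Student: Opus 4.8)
The plan is to exploit the fact that a non-primitive binary uniform morphism cannot carry two genuinely aperiodic fixed points. A morphism has a fixed point starting in a letter $x$ only if it is prolongeable on $x$, so having two fixed points forces prolongeability on both $0$ and $1$, i.e. $a\geq 1$ and $d\geq 1$ in the matrix notation of Theorem \ref{morph}. The incidence matrix $\left(\begin{array}{cc} a & c \\ b & d\end{array}\right)$ is primitive precisely when $b>0$, $c>0$ and $a+d>0$; since $a+d>0$ is already guaranteed, non-primitivity forces $b=0$ or $c=0$. Thus, up to exchanging the two letters, one image is a constant power and one of the two fixed points is purely periodic, hence trivially WAP. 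It therefore suffices to arrange the remaining fixed point to fail WAP, and the whole proposition becomes a direct application of Theorem \ref{morph}.

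Concretely, I would take $\varphi\colon 0\mapsto 001,\ 1\mapsto 111$, which is $3$-uniform, prolongeable on both letters and non-primitive (here $c=0$). Its fixed point starting in $1$ is $1^{\omega}$, which is periodic and hence WAP. Let $w$ be the fixed point starting in $0$. Since only $0$'s produce $0$'s, $w$ contains $|\varphi^n(0)|_0=2^n$ occurrences of $0$, so it has infinitely many $0$'s while $\rho_0(w)=\lim_n (2/3)^n=0$. I would then feed $w$ into Theorem \ref{morph} with $a=2$, $b=1$, $c=0$, $d=3$ and vectors $\textbf{v}_0=(1,-1)$, $\textbf{v}_1=(1,0)$.

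The verification splits exactly along the cases of the theorem. Since $g_{\varphi(0)}$ never returns to $0$ on $(0,k]$ and $g_{\varphi(0)}(k)=-2<-b$, conditions $1$ and $2$ both fail and we land in case $3$. The only $1$ in $\varphi(0)$ sits at its end, giving $\Delta=A=-2$, while $\varphi(1)=111$ keeps the graphic level, so $t=0$; hence $\Delta\frac{A-c}{-b}+t=-4<-2=A$, the criterion of Theorem \ref{morph}(3) fails, and $w$ is not WAP. The same conclusion also follows directly from the remark that a WAP word realizes its letter frequencies inside the blocks: a WAP word with $\rho_0=0$ would have every block past the first consisting only of $1$'s, hence be ultimately $1^{\omega}$, contradicting the presence of infinitely many $0$'s. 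I expect the only real point to be recognizing that non-primitivity together with two fixed points forces this degenerate shape; once the morphism is fixed, both the theorem bookkeeping and the direct frequency argument are routine.
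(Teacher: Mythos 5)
Your argument is internally correct, but it takes a genuinely different (and strictly weaker) route than the paper. The paper's witness is $\varphi\colon 0\mapsto 0001$, $1\mapsto 1011$ --- a \emph{primitive} morphism with two aperiodic fixed points: the one starting in $1$ is WAP by case 1 of Theorem \ref{morph} (its graphic returns to $0$ inside $\varphi(1)$ after relabelling), and the one starting in $0$ fails the case 3 criterion ($\Delta\frac{A-c}{-b}+t=-4<-2=A$). You instead read the hypothesis ``non-primitive'' in Theorem \ref{morph} literally, deduce that two fixed points force $b=0$ or $c=0$, and exhibit the degenerate $\varphi\colon 0\mapsto 001$, $1\mapsto 111$, whose fixed points are $1^{\omega}$ (trivially WAP) and a word $w$ with $\rho_0(w)=0$ but infinitely many $0$'s (not WAP, by your frequency argument or by the case 3 computation, both of which check out). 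This does prove the proposition as stated, and your direct frequency argument is arguably more elementary and self-contained than invoking Theorem \ref{morph}. However, it loses the content that the paper actually wants from this proposition: the remark immediately following it asserts the existence of two words \emph{with the same set of factors}, one WAP and one not, and that follows from the paper's primitive example (whose two fixed points generate the same language) but not from yours, since $F(1^{\omega})\neq F(w)$. Note also that your structural premise rests on the word ``non-primitive'' in the theorem statement, which is contradicted by the paper's own usage (its example morphism is primitive, and the introduction announces a criterion for \emph{primitive} uniform morphisms), so ``non-primitive'' there is almost certainly a slip; taking it at face value is what steers you to the degenerate example.
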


\begin{proof} Consider a morphism $\varphi: 0\to 0001, 1\to 1011$. Using Theorem \ref {morph} (3),
one gets that the fixed point starting from $0$ is not WAP. Using
Theorem \ref {morph} (1), one gets that the fixed point starting
from $1$ is WAP.
\end{proof}

\noindent \textbf{Remark.} In particular, this means that there
exist two words with same sets of factors such that one of them is
WAP while the other one is not.

\medskip In this section we need some more definitions.

Let $T: \Sigma^{\omega} \to \Sigma^{\omega}$ denote the
\emph{shift transformation} defined by $T: (x_n)_{n\in \omega} \to
(x_{n+1}) _{n\in \omega}$. The \emph{shift orbit} of an infinite
word $x\in \Sigma^{\omega}$ is the set $O(x) = \{T^i(x) | i\geq 0
\}$ and its \emph{closure} is given by $\overline{O}(x) = \{ y \in
\Sigma^{\omega} | {\rm Pref}(y) \subseteq {\rm Pref}(T^i(x))$,
where ${\rm Pref}(w)$ denotes the set of prefixes of a finite or
infinite word $w$. For a uniformly recurrent word $w$ any infinite
word $x$ in $\overline{O}(w)$ has the same set of factors as $w$.

We say that $w\in \Sigma^{\omega}$ has \emph{uniform frequency}
$\rho_a$ of a letter $a$, if every word in $\overline{O}(w)$ has
frequency $\rho_a$ of a letter $a$. In other words,  a letter
$a\in\Sigma$ has uniform frequency $\rho_a$ in $w$ if its minimal
frequency $\underline{\rho}_a = \lim_{n\to \infty}\inf_{x\in
F_n(w)} \frac{|x|_a}{|x|}$ is equal to its maximal frequency
$\overline{\rho}_a = \lim_{n\to \infty}\sup_{x\in F_n(w)}
\frac{|x|_a}{|x|}$, i. e. $\underline{\rho}_a=\overline{\rho}_a$.

\begin{theorem}
Let $w$ be an infinite binary uniformly recurrent word.

\noindent 1. If $w$ has irrational frequencies of letters, then
every word in its shift orbit closure is not WAP.

\noindent 2. If $w$ does not have uniform frequencies of letters,
then there is a point in a shift orbit closure of $w$ which is
WAP.

\noindent 3. If $w$ has uniform rational frequencies of letters,
then there is a point in a shift orbit closure of $w$ which is
WAP.

\noindent 4. There exists a non-balanced word $w$ with uniform
rational frequencies of letters, such that every point in a shift
orbit closure of $w$ is WAP.

\end{theorem}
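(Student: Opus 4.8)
The plan is to treat the four parts separately, using Proposition~\ref{freq} to translate (non)existence of frequencies into WAP statements and Proposition~\ref{boundedwidth} for the balanced case. For part~1, the key point is that an irrational frequency propagates to the whole orbit closure. Since $w$ is uniformly recurrent, every $x\in\overline O(w)$ has the same set of factors as $w$, and I would argue that the irrational letter frequency of $w$ is in fact uniform, i.e. $\underline\rho_a=\overline\rho_a$, so that every $x\in\overline O(w)$ shares this same irrational frequency; Proposition~\ref{freq}(2) then gives at once that no such $x$ is WAP. The step to watch here is justifying that existence of an irrational frequency at the single point $w$ forces uniformity across $\overline O(w)$ (equivalently, unique ergodicity at this value), which for uniformly recurrent words I would either take as the intended reading of the hypothesis or derive from the observation that a point realizing a different frequency would, via Proposition~\ref{freq}, be inconsistent with the shared-factor structure.

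For part~2, non-uniformity means $\underline\rho_a<\overline\rho_a$, so factors with frequency arbitrarily close to $\overline\rho_a$ and factors arbitrarily close to $\underline\rho_a$, of every length, occur in $w$. I would build a point $x\in\overline O(w)$ whose prefix frequency of $a$ oscillates, by concatenating longer and longer such factors alternately (pushing the prefix frequency up toward $\overline\rho_a$ and then down toward $\underline\rho_a$) and passing to a limit inside $\overline O(w)$. The resulting $x$ has $\liminf$ and $\limsup$ of its prefix frequency equal to $\underline\rho_a<\overline\rho_a$, hence no frequency, and Proposition~\ref{freq}(3) shows that $x$ is WAP.

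For part~3, I fix integer vectors so that the horizontal direction corresponds to the slope $\rho=p/q$; then every point of $\overline O(w)$ has an integer graphic $g_x$ with $g_x(n)=o(n)$, and such a point is WAP exactly when $g_x$ meets some horizontal line infinitely often. If some point has bounded deviation we are done by the pigeonhole principle. Otherwise I would run a shift-to-extremum compactness argument: choosing positions where $g_w$ attains its running minimum and passing to a limit yields a (two-sided) point $x$ with $g_x\ge 0$ and $g_x(0)=0$, and I would use uniform recurrence to show that level $0$ is attained infinitely often. I expect this last step to be the main obstacle, because factor recurrence only controls the increments of $g_x$, not its absolute level; to force genuine returns rather than an escape to $+\infty$ I must combine the minimality of level $0$ with the syndetic recurrence of net-zero-deviation factors (factors whose length is divisible by $q$ and which carry exactly the right count of $a$'s).

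For part~4, I would exhibit the paperfolding word $w$, which is uniformly recurrent (being a Toeplitz word), has uniform rational frequency $\tfrac12$, and is not balanced by Proposition~\ref{boundedwidth}(2). It remains to show that every $x\in\overline O(w)$ is WAP. Here I would exploit the self-similar Toeplitz structure of the pattern $0?1?$: the positions filled at each stage contribute equal numbers of $0$'s and $1$'s over each aligned block of length $2^k$, so for every point the deviation graphic returns to a fixed horizontal line along the lattice of block boundaries infinitely often. Making this return argument uniform over all points of the closure, rather than only for $w$ itself, is the delicate part, and is the second place where I expect to spend most of the effort.
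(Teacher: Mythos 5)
Your parts 1 and 2 follow the paper's (extremely terse) route: the paper disposes of both by citing Proposition~\ref{freq}(2) and (3), and you supply the missing glue. Two small cautions there: in part 1 the propagation of the irrational frequency to all of $\overline{O}(w)$ really does have to be read into the hypothesis (a minimal subshift need not be uniquely ergodic, and a point of $\overline{O}(w)$ with no frequency would be WAP by Proposition~\ref{freq}(3), so ``uniform irrational frequency'' is the only reading under which the claim is safe); and in part 2 you cannot literally \emph{concatenate} high- and low-frequency factors, since arbitrary concatenations of factors need not lie in $\overline{O}(w)$ --- you must build a nested chain $u_1 \prec u_2 \prec \cdots$ of factors of $w$, each a prefix of the next, with alternating frequency bias, and take the limit.

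The genuine gap is in part 3, and it is exactly the one you flag yourself. Your running-minimum compactness argument yields a point $x$ with $g_x\ge 0$ and $g_x(0)=0$, but nothing in that construction prevents $g_x(n)\to+\infty$, and uniform recurrence of factors controls only increments $g_x(p+i)-g_x(p)$, never the absolute level; so the argument stalls precisely where you predict. The paper avoids this obstacle entirely with a nested return-word construction: starting from any factor $u_1$ with $\rho_0(u_1)\ge\rho_0$, it factors $w$ into first returns to $u_1$ and observes that some return word $u_2$ (which begins with $u_1$) must satisfy $\rho_0(u_2)\ge\rho_0$ --- otherwise, since uniform recurrence bounds the return lengths, the whole word would have frequency strictly below $\rho_0$. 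Alternating the inequality at each level produces a chain $u_1\prec u_2\prec\cdots$ whose limit $u\in\overline{O}(w)$ has $\rho_0(u_{2i})\ge\rho_0$ and $\rho_0(u_{2i+1})\le\rho_0$, so $g_u$ crosses the line of slope $\rho_0$ infinitely often, and the pigeonhole over the finitely many parallel integer lines gives WAP. You should adopt this oscillation argument (or something equivalent, e.g.\ an Atkinson-type recurrence theorem for integer cocycles) rather than the one-sided minimum argument. For part 4 the paper omits the proof, so there is nothing to compare against; your paperfolding candidate is plausible, but the step you identify as delicate --- making the return-to-a-level argument uniform over every point of the closure, not just over $w$ --- is again left open, so as written parts 3 and 4 are both incomplete.
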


\begin{proof}

\noindent 1. Follows from Proposition \ref{freq} (2).

\medskip

\noindent 2. Follows from Proposition \ref{freq} (3).

\medskip

\noindent 3. In the proof we use the notion of a return word. For
$u\in F(w)$, let $n_1<n_2<\dots$ be all integers $n_i$ such that
$u=w_{n_i}\dots w_{n_{i}+|u|-1}$. Then the word $w_{n_i}\dots
w_{n_{i+1}-1}$ is a \emph{return word} (or briefly \emph{return})
of $u$ in $w$ \cite {durand}, \cite {hz}, \cite {pz}.

We now build a WAP word $u$ from $\overline{O}(w)$. Start with any
factor $u_1$ of $w$, e. g. with a letter. Without loss of
generality assume that $\rho_{0}(u_1)\geq \rho_0(w)$. Consider
factorization of $w$ into first returns to $u_1$: $w = v^1_1 v^1_2
\dots v^1_i \dots$, so that $v^1_i$ is a return to $u_1$ for
$i>1$. Then there exists $i_1>1$ satisfying
$\rho_0(v_{i_1}^1)\geq\rho_0$. Suppose the converse, i. e., for
all $i>1$ $\rho_0(v_i^1)<\rho_0$. Due to uniform recurrence, the
lengths of $v_i^1$ are uniformly bounded, 
and hence $\rho_0(w)<\rho_0$, a contradiction. Take
$u_2=v^1_{i_1}$, so $u_1={\rm pref} (u_2)$. Now consider a
factorization of $w$ into first returns to $u_2$: $w = v^2_1 v^2_2
\dots v^2_i \dots$. Then there exists $i_2>1$ satisfying
$\rho_0(v_{i_2}^2)\leq\rho_0$, take $u_3=v^2_{i_2}$. Continuing
this line of reasoning to infinity, we build a word
$u=\lim_{n\to\infty}u_i$, such that $\rho_0(u_{2i})\geq\rho_0$,
$\rho_0(u_{2i+1})\leq\rho_0$. So, the graphic of $w$ with vectors
$\textbf{v}_0=(1,-1)$ and $\textbf{v}_0=(1,-1)$ intersects the line $y=\rho_0 x$
infinitely many times. Since $\rho_0$ is rational, by a pigeonhole
principle the graphic intersects in integer points infinitely many
times one of finite number (actually, a denominator of $\rho_0$)
of lines parallel to $y=\rho_0 x$. It follows that $u$ is WAP with
frequency $\rho_0$, and by construction $u\in \overline{O}(w)$.

\medskip

\noindent 4. Due to space limitations, we omit the proof of this
item.

\end{proof}

\end{document}